\numberwithin{equation}{section}
\newcommand{\Bc}{{\mathcal B}}
\newcommand{\Dc}{{\mathcal D}}\newcommand{\Ec}{{\mathcal E}}\newcommand{\Fc}{{\mathcal F}}
\newcommand{\Sc}{{\mathcal S}}
\newcommand{\As}{{\mathscr A}}
\newcommand{\Cs}{{\mathscr C}}
\newcommand{\bFc}{\boldsymbol{\Fc}}\newcommand{\bF}{{\mathbf F}}
\newcommand{\Hs}{{\mathscr H}}
\newcommand{\Mg}{{\mathfrak M}}
\newcommand{\R}{{\mathbb R}}
\newcommand{\bnu}{\boldsymbol{\nu}}
\begin{document}
\newtheorem {defn}{Definition}[section]
\newtheorem{teo}{Theorem}[section]
\newtheorem{lem}[teo]{Lemma}
\newtheorem{prop}[teo]{Proposition}
\newtheorem{cor}[teo]{Corollary}
\theoremstyle{remark}
\newtheorem{ex}{Example}[section]
\newtheorem{rem}{Remark}[section]
\hyphenation{se-cond}
\hyphenation{o-pe-ra-tor}
\hyphenation{de-ve-lop-ments}
\hyphenation{sta-tio-na-ri-ty}
\hyphenation{self-ad-jointness}
\hyphenation{po-si-ti-ve}

\title{Stochastic Mappings and Random Distribution Fields II. Stationarity}

\author{P\u{a}storel Ga\c{s}par \and Lorena Popa}

\date{\today}

 \maketitle


\begin{abstract}
As a continuation of \cite{GaPopa} this paper treats the stationary and stationarily cross-correlated multivariate stochastic mappings. Moreover for the case of multivariate random distribution fields, a particular form for the operator cross covariance distribution is given, from which a Kolmogorov type isomorphism theorem and a spectral representation of a stationary multivariate random distribution field are derived.
\end{abstract}

\section{Introduction}
The stationarity for the univariate, one time parameter stochastic processes was introduced by Khincine \cite{Khin} and developed by Kolmogorov \cite{Kolmo}. Other important results on finite variate case were obtained by Wiener and Masani (\cite{WieMas1}, \cite{WieMas2} and \cite{MasPred}). \\
The study of stationarity for  infinite variate  random fields is  treated in Section 4.2 of the book \cite{Kaki}. \\
The first extension of stationarity from one continuous time parameter univariate stochastic processes regarded as continuous functions to such stochastic processes regarded as distributions was given first by It\^o \cite{Ito} and Gelfand \cite{Gelf1} and continued in \cite{Roz2} and
\cite{Bala}. \\
On the other side, following the framework in \cite{Kaki}, an extension of a general correlation theory to the distributional setting, but for the was given in our recent paper \cite{GaPopa}. By using the development from \cite{GaPopa} it is our aim to treat the stationarity in this setting here. \\
Since the definition of the stationarity from \cite{Kaki} is not applicable in the distributional setting chosen, as indirectly suggested in \cite{Ito}, it is necessary to make use of the action of the group of translations of the euclidian space $\R^d$ ($d$ being a fixed natural number) on the fundamental space $\Dc_d$ from distribution theory.
For an adequate placement in our development of \emph{multivariate stochastic measures} we shall define the stationarity in the most general setting from \cite{GaPopa}, namely that of \emph{multivariate stochastic mappings}.\\
Consequently, we shall currently use the notations and often refer to results from \cite{GaPopa}.\\
Hence, for a separable infinite dimensional Hilbert space and a probability space $(\Omega, \As, \wp)$, $L^2_{s,0}[\wp, H] =\ : \Hs$ means the normal Hilbert $\Bc(H)$-module of $H$~-~valued strong second order random variables of zero mean with the gramian denoted by $[\cdot,\cdot]_{\Hs}$, while $\Dc_d$ is the space of complex valued test functions on the $d$~-~dimensional real euclidian space $\R^d$ from distribution theory and $\Dc'_d(\Hs)$ is the $\Bc(H)$-module of $\Hs$-valued distributions on $\R^d$, whose elements are called \emph{multivariate second order random distribution fields (denoted briefly m.s.o.r.d.f.)}. More generally, when instead of $\Dc_d$ one considers an arbitrary set (a topological space) $\Lambda$, a (continuous) map $\Phi$ from $\Lambda$ to $\Hs$ is called a \emph{multivariate second order stochastic mapping (m.s.o.s.m)} on $\Lambda$, their set being denoted by $\mathbf{M}(\Lambda, \Hs)$ as in Section 2 of \cite{GaPopa}.\\
We also recall from \cite{GaPopa} the notions of operator cross covariance function
$$\Gamma_{\Phi,\Psi}(\lambda,\mu):=[\Phi(\lambda),\Psi(\mu)]_{\Hs} ;\,\quad\lambda,\mu\in\Lambda,$$
(operator cross covariance distribution
$$C_{U,V}(\varphi\otimes\psi):=\Gamma_{U,V}(\varphi,\overline{\psi}), \,\quad\varphi, \psi\in\mathcal{D}_{d}),$$
for a pair $\Phi, \Psi\in \mathbf{M}(\Lambda, \Hs)$ (respectively $U, V\in\mathcal{D}'_{d}(\Hs)$), in whose terms modular time domains and the measurements spaces, as well as the operator subordination (Theorems 2.1, 2.2 and Corollaries 3.2~-~3.4)are described.\\
In section 2, when a semigroup $S$ acts on a set $\Lambda$, or if $S=\Lambda$ and is a $\ast$~-~semigroup, three kinds of $S$-stationarity and $S$-stationarily cross correlatedness are defined, some significant particular cases being discussed. \\
Section 3 is devoted to the (operator) $\R^d$~-stationarity and $\R^d$-stationarily cross correlatedness of m.s.o.r.d.f., where a particular form of the operator cross covariance distribution, the spectral representation of this operator covariance distribution as well as the spectral representation of m.s.o.r.d.f. itself are given .

\section{Preliminaries}

As in \cite{MasDilat} we say that a multiplicative semigroup $S$ \textit{acts} on a set $\Lambda$, if there exists a binary operation
\begin{equation}\label{1420}
S\times \Lambda\ni (s,\lambda)\mapsto s\odot \lambda \in \Lambda,
\end{equation}
with the property
\begin{equation}\label{1421}
(s_{1} s_{2})\odot \lambda = s_{1}\odot(s_{2}\odot \lambda)\,,\quad  s_{1},s_{2}\in S, \lambda\in \Lambda.
\end{equation}
If $S$ has a unit $e$ then it is also required that
\begin{equation}\label{14211}
e\odot \lambda = \lambda\,,\quad \lambda\in \Lambda.
\end{equation}
Given a semigroup $S$  which acts on $\Lambda$, then two m.s.o.s.m. $\Phi$ and $\Psi$ with the same index set $\Lambda$ are called \emph{operator} $S$~-~\emph{stationarily cross correlated}, if their operator cross covariance function $\Gamma_{\Phi,\Psi}$ satisfies
\begin{equation}\label{1424}
\Gamma_{\Phi ,\Psi}^{s}(\lambda_{1},\lambda_{2}):=\Gamma_{\Phi,\Psi}(s\odot \lambda_{1}, s\odot \lambda_{2})=\Gamma_{\Phi,\Psi}(\lambda_{1},\lambda_{2}),\  s\in S,\ \lambda_{1},\lambda_{2}\in \Lambda.
\end{equation}
When (\ref{1424}) holds with the scalar cross covariance function $\gamma_{\Phi , \Psi}$ instead of $\Gamma_{\Phi, \Psi}$, then $\Phi$ and $\Psi$ are said to be $S$-\emph{stationarily cross correlated}. The pair $\Phi, \Psi$ is said to be \emph{scalarly} $S$-\emph{stationarily cross correlated} if, for each $x\in H$,
\textit{ the univariate second order stochastic mappings $\{(\Phi(\lambda),x)_{H}\}_{\lambda\in\Lambda}$, and $\{(\Psi(\lambda),x)_{H}\}_{\lambda\in\Lambda}$ are $S$-stationarily cross correlated.}\footnote[1]{Since for a fixed $\lambda\in\Lambda$, we can regard $\Phi(\lambda)\in L^2_{s,o}[\wp, H]$ also as a $H$-valued function of $\omega\in\Omega$, it is clear that for each $x\in H$, $(\Phi(\lambda),x)_{H}(\omega)$ is defined $\wp$~-~a.e. by $(\Phi(\lambda)(\omega),x)_{H},\, \omega\in\Omega$.}
It is clear that the operator $S$-stationarily cross correlatedness implies the other two kinds of $S$-stationarily cross correlatedness for a pair $\Phi$, $\Psi$. \\
When the operator covariance distribution $\Gamma_{\Phi}:=\Gamma_{\Phi,\Phi}$ satisfies (\ref{1424}), we say that the m.s.o.s.m. $\Phi$ is \textit{operator} $S$-\textit{stationary}. The $S$-stationary and scalarly $S$-stationary m.s.o.s.m. will be in an obvious way correspondingly defined.

In the particular case when $S=\Lambda$, the $S$-stationarily cross corelatedness will be defined only when $S$ is a $\ast$-semigroup (i.e. endowed with an involution $\ast$).
Namely a pair $\Phi,\Psi\in \mathbf{M}(S,\Hs)$ is called (operator) $S$-stationarily cross correlated when the (operator) cross correlation function $\gamma_{\Phi,\Psi}(s,t)$, (respectively $\Gamma_{\Phi,\Psi}(s,t)$) do not depend separately on $s$ and $t$, but of $st^{\ast}$ and when $S$ is a topological $\ast$~-~semigroup the function $\widetilde{\Gamma}_{\Phi,\Psi}:S\rightarrow\Hs$, satisfying $\widetilde{\Gamma}_{\Phi,\Psi}(st^{*})=\Gamma_{\Phi,\Psi}(s,t)$ is continuous on $S$. In an obvious way will be defined the scalarly $S$-stationarily cross  correlatedness as well as the corresponding three kinds of $S$-stationarity for a m.s.o.s.m. $\Phi$.\\
Let us remark that when $S$ is a locally compact abelian group with the inversion of elements as involution, then the above definitions are exactly that one from \cite{Kaki}(Definition 1 Sec 4.2 pp 151), which are also obtained as particular case of $G$-stationarities considering the action of $G$ upon itself trough its translations.\\
It is well known that in the spectral representation of operator stationary m.s.o.r.f. an important role is played by the multivariate second order stochastic regular (bounded) measures (see Theorem 2 Sect 4.2 pp. 151 from \cite{Kaki}). As we shall see in the next Section the class of multivariate second order stochastic regular (not necessarily bounded) measures will play a similar role in the spectral representation of operator $\mathbb{R}^{d}$-stationary m.s.o.r.f.d.\\
This is why in the rest of this Section we shall consider the (operator) stationarity and the (operator) stationarily cross correlatedness for m.s.o. stochastic regular (not necessarily bounded) measures on $\mathbb{R}^{d}$ from the classes $f o s v r \mathcal{M}_{d}(\Hs)$ and $f s v r \mathcal{M}_{d}(\Hs)$ considered in Section 4 (Prop.4.1) from \cite{GaPopa}. As we have seen the index set for these particular m.s.o.s.m. is the $\delta$-ring $\mathcal{B}or(\mathbb{R}^{d})$ of bounded Borel sets from $\mathbb{R}^{d}$, which will be regarded as a $\ast$-semigroup with the intersection as inner operation and with the identity as involution (i.e. $A^{\ast}=A, A\in \mathcal{B}or(\mathbb{R}^{d})$)\\
Let us remark that to a pair $\xi, \eta \in f(o) s v r \mathcal{M}_{d}(\Hs)$ the (operator) cross covariance function defines a $\Cs_{1}(H)$-valued bimeasure $\tau_{\xi,\eta}\in f(o)svr \Mg_{2d}(\Cs_{1}(H))$ trough
$$\tau_{\xi,\eta}(A,B)=(\xi\otimes\eta)(A,B)=[\xi(A),\eta(B)]_{\Hs}\,,\quad A,B\in\mathcal{B}or(\mathbb{R}^{d}),$$
while the (operator) covariance bimeasure of $\xi$ is $\tau_{\xi}=\tau_{\xi,\xi}$, wich is a positive definite bimeasure (see also Section 4 from \cite{GaPopa}), i.e. with the notation from \cite{GaPopa}, $\tau_{\xi}\in f(o)svr \Mg_{2d}^{pd}$. Now we say that the pair $\xi, \eta$ is operator $\mathcal{B}or(\mathbb{R}^{d})$-stationarily cross correlated when there is a $\Cs_{1}(H)$-valued (not necessarily bounded) measure $\zeta\in \mathcal{B}or(\mathbb{R}^{d})$ such that $\tau_{\xi,\eta}=\zeta(A\cap B) \, , \quad A, B \in \mathcal{B}or(\mathbb{R}^{d})$.

\section{Stationary multivariate random distribution fields}

In terms of the general concepts introduced in the previous Section, we study here different kinds of stationarity for m.s.o.r.d.f. in the framework developed in \cite{GaPopa}, extending simultaneously the stationarity of one-time parameter random distributions from \cite{Ito}, \cite{Gelf1} and the different types of stationarity of m.s.o.r.f. on $\R^d$  as treated  for infinite variate second order stochastic processes on an arbitrary locally compact abelian group $G$ in Sec. 4.2. of \cite{Kaki}.

To this purpose we shall use two types of stationarity, for which we shall prove that they are equivalent. The \textit{first one} is that obtained by the outer action upon $\Dc_{d}$ of the group $\R^d$ given by
\begin{equation}\label{441}
\mathbb{R}^{d}\times \Dc_{d}\ni (x,\varphi)\mapsto x\odot \varphi\:=\tau_{x}\varphi \in\Dc_{d},
\end{equation}
where $\tau_{x}\,,(x\in\mathbb{R}^{d})$ is the translation operator, i.e. $(\tau_x \varphi)(y)= \varphi(y-x),\ y \in \mathbb{R}^d$.
The second one will by obtained by considering $\Dc_d$ as a $\ast$~-~semigroup (without unit) with
the convolution as inner multiplication and the involution ${ }^\sim$ (i.e. $\widetilde \varphi (x) = \overline{\varphi(-x)}, x \in \R^d,\ \varphi \in \Dc_d$).
\begin{rem}
Let us observe that the convolution on $\Dc_d$
\begin{equation}\label{eq:act_D_d_on_D_d}
\Dc_d\times\Dc_d\ni (\psi, \varphi) \mapsto \varphi*\psi \in \Dc_d
\end{equation}
and the action \eqref{441}
are linked by the following invariance of the convolution with respect to the action \eqref{441}
\begin{equation}\label{eq:link_R^d_D_d}
(x\odot \varphi)*(\widetilde {x\odot \psi}) = \varphi * \widetilde\psi,
\end{equation}
for each $x\in \R^d$ and $\varphi, \psi \in \Dc_d$.
\end{rem}

Putting $U, V, \mathbb{R}^d, \mathcal{D}_d$, respectively $U, V, \Dc_d, \Dc_d$ instead of $\Phi, \Psi,  S, \Lambda$ in the definition of the (operator) cross $S$-stationarity
for $\Phi$ and $\Psi$, we have the definition of the \textit{(operator) cross} $\R^d$-\textit{stationarity}, respectively the \textit{(operator) cross } $\Dc_d$-\textit{stationarity for the m.s.o.r.d.f. $U$ and $V$}. Of course if $U = V$ we have the definition of the (\textit{operator}) $\R^d$-\textit{stationarity}, respectively the (\textit{operator}) $\Dc_d$-\textit{stationarity of} $U$.\\

\begin{rem}
The definition of the (operator) $\mathbb{R}^{d}$-stationarity for a m.s.o.r.d.f. $U=\{U_{\varphi}\}_{\varphi\in \Dc_{d}}$ is coherent to the classical definition of the (operator) stationarity of a multivariate random field.

Indeed, if $F:\mathbb{R}^{d}\rightarrow \Hs$ is a multivariate second order random field in the terminology adopted in \cite{GaPopa} and in the framework of \cite{Kaki}, and $U_{F}$ is the associated multivariate random field distribution (given in (4.1) of \cite{GaPopa}), then the operator $\mathbb{R}^{d}$-stationarity of $U_{F}$ means that for any $x\in \mathbb{R}^{d}$ and any $\varphi,\,\psi\in\Dc_{d}$

$$\Gamma_{U_{F}}(x\odot\varphi,x\odot\psi)=\Gamma_{U_{F}}(\varphi,\psi),$$
which is equivalent to
\begin{multline*}
\left[\int\varphi(y-x)F(y)dy,\int\psi(u-x)F(u)du\right]_{\Hs}= \\
\left[\int\varphi(y)F(y)dy,\int\psi(u)F(u)du\right]_{\Hs},
\end{multline*}
that is
\begin{multline*}
\int\int\varphi(y-x)\overline{\psi(u-x)}\big[F(y),F(u)\big]dydu = \\
\int\int\varphi(y)\overline{\psi(u)}\big[F(y),F(u)\big]dydu.
\end{multline*}
\end{rem}

Now after a rather obvious change of variable in the left hand side, then considering instead of $\varphi$ and  $\psi$, two regularizing sequences that converge to the Dirac distribution concentrated in $y_{0}$ and $u_{0}$, respectively and finally passing to the limit, one obtains

$$\left[F(y_{0}+x),F(u_{0}+x)\right]_{\Hs}=\left[F(y_{0}),F(u_{0})\right]_{\Hs}$$
where $x, y_{0},u_{0}$ are arbitrary in $\mathbb{R}^{d}$. For $x=-u_{0}$ we have  exactly the operator stationarity of $F$ known from \cite{Kaki} (Def 1 Section 4.2, pp. 151). Conversely, since for each $x\in\mathbb{R}^{d}$ we have $\tau_{x}U_{F}=U_{\tau_{x}}F$ it results easily that the classical operator stationarity of the m.s.o.r.f.  $F$ implies the $\mathbb{R}^{d}$~-~stationarity of the m.s.o.r.d.f. $U_{F}$.\\
The coherence of the stationarity and scalar stationarity of $U_{F}$ to that of $F$ is handled analogously, the same being true for (operator) cross stationarity.

\begin{rem}
The notions of stationarity introduced above can be formulated in terms of the scalar (or operator) cross covariance distributions $c_{U,V}\in \Dc'_{2d}$, respectively $C_{U,V}\in \Dc'_{2d}(\Cs_{1}(H))$  associated in \cite{GaPopa} to the pair $U, V$ of m.s.o.r.d.f.. Thus extending the action \ref{441}, to an action of $\mathbb{R}^d$ upon $\Dc_{2d}$

\begin{equation}\label{eq:action.squaredot}
\mathbb{R}^{d}\times \Dc_{2d}\ni (x,\chi)\mapsto \tau_{(x,x)}\chi=x\boxdot \chi \in\Dc_{2d},
\end{equation}
where if $x=(x_{1},\ldots,x_{d})\in\mathbb{R}^{d}$, then  $(x,x)=(x_{1},\ldots,x_{d},x_{1},\ldots, x_{d})\in\mathbb{R}^{2d}$, the conditions for (operator) cross $\mathbb{R}^{d}$-stationarity of $U, V$ become
\begin{eqnarray}
\tau_{(x,x)}c_{U, V}&=c_{U,V}\,;\quad x\in\mathbb{R}^{d}\,,\\
\tau_{(x,x)}C_{U,V}&=C_{U,V}\,;\quad x\in\mathbb{R}^{d}\,.
\end{eqnarray}
\end{rem}

In what follows we shall see that, under these kinds of cross stationarity conditions, the (operator) cross covariance distributions can be also expressed as (positive definite, if $U=V$) distributions on $\mathbb{R}^d$. 

\begin{teo}\label{teo441}
If $U,V\in \Dc'_{d}(\Hs)$ are (operator) $\mathbb{R}^{d}$-stationarily cross correlated m.s.o.r.d.f., then there exists a (operator) scalar distribution in $d$ variables $k_{U,V}$ ($K_{U,V}$ respectively), for which
\begin{equation}\label{eq:corel.scal}
\gamma_{U,V}(\varphi,\psi)=c_{U,V}(\varphi\otimes\overline{\psi})=
k_{U,V}(\varphi\ast\widetilde{\psi})\,;\quad\varphi,\psi\in\Dc_{d},
\end{equation}
respectively
\begin{equation}\label{eq:corel.oper}
\Gamma_{U,V}(\varphi,\psi)=C_{U,V}(\varphi\otimes\overline{\psi})=
K_{U,V}(\varphi\ast\widetilde{\psi})\,;\quad\varphi,\psi\in\Dc_{d},
\end{equation}
holds, they being linked by
\begin{equation}
\mathrm{tr} K_{U,V}(\varphi)=k_{U,V}(\varphi)\,,\quad \varphi\in \Dc_{d}.
\end{equation}
When $U = V$, then $k_U = k_{U, U}$, respectively $K_U = K_{U, U}$ are (operator) positive definite distributions on $\R^d$.\\
Conversely, for any scalar positive definite distribution $k \in \Dc_d'$ (operator positive definite distribution $K\in \Dc_{d}'\big(\Cs_{1}(H)\big)$) there exists a $\mathbb{R}^{d}$-stationary (respectively operator $\mathbb{R}^d$-stationary) m.s.o.r.d.f. $U\in \Dc_d'(\Hs)$, such that $k = k_U$ (respectively $K=K_{U}$).
\end{teo}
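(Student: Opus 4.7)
The plan is to split the argument into a forward factorization via a change of variables, an immediate positive-definiteness check, and a Kolmogorov-type reconstruction for the converse.

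For the forward direction, I would introduce the affine diffeomorphism $\Xi\colon \R^{2d}\to\R^{2d}$, $\Xi(y,z)=(y+z,z)$, and pull back the covariance distribution to obtain
\begin{equation*}
\widetilde{C}_{U,V}(\xi) := C_{U,V}\bigl((y_1,y_2)\mapsto \xi(y_1-y_2,y_2)\bigr),\qquad \xi\in\Dc_{2d}.
\end{equation*}
A direct substitution shows that the diagonal invariance $\tau_{(x,x)}C_{U,V}=C_{U,V}$ is equivalent to translation invariance of $\widetilde{C}_{U,V}$ in its second argument only, namely $\tau_{(0,x)}\widetilde{C}_{U,V}=\widetilde{C}_{U,V}$ for all $x\in\R^d$. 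The standard structure theorem for distributions invariant under a coordinate subgroup of translations then yields $K_{U,V}\in\Dc'_d(\Cs_1(H))$ with $\widetilde{C}_{U,V}=K_{U,V}\otimes 1_z$, that is, $\widetilde{C}_{U,V}(\xi)=K_{U,V}\bigl(y\mapsto\int\xi(y,z)\,dz\bigr)$. Evaluating at the preimage under $\Xi$ of $\varphi\otimes\overline{\psi}$, namely $\xi(y,z)=\varphi(y+z)\overline{\psi(z)}$, and recognizing $(\varphi\ast\widetilde{\psi})(y)=\int\varphi(y+z)\overline{\psi(z)}\,dz$, produces \eqref{eq:corel.oper}. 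Identity \eqref{eq:corel.scal} and the trace relation $\mathrm{tr}\,K_{U,V}(\varphi)=k_{U,V}(\varphi)$ then follow from the identity $\mathrm{tr}\,[U_\varphi,V_\psi]_{\Hs}=(U_\varphi,V_\psi)_{\Hs}$ recalled in \cite{GaPopa}. Positive definiteness when $U=V$ is immediate: $K_U(\varphi\ast\widetilde{\varphi})=[U_\varphi,U_\varphi]_{\Hs}\ge 0$ as a nonnegative trace-class operator, and the full sesquilinear positivity follows by applying this to linear combinations.

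For the converse, given an operator positive definite $K\in\Dc'_d(\Cs_1(H))$, I would run the pullback in reverse, setting
\begin{equation*}
C(\xi):=K\Bigl(y\mapsto\int\xi(y+z,z)\,dz\Bigr),\qquad \xi\in\Dc_{2d}.
\end{equation*}
This $C$ is manifestly invariant under the diagonal action of $\R^d$, agrees with $K(\varphi\ast\widetilde{\psi})$ on $\varphi\otimes\overline{\psi}$, and inherits operator positive definiteness from $K$ via the identity $\sum_{i,j}\langle C(\varphi_i\otimes\overline{\varphi_j})x_j,x_i\rangle=\sum_{i,j}\langle K(\varphi_i\ast\widetilde{\varphi_j})x_j,x_i\rangle$. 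The Kolmogorov-type reconstruction for m.s.o.r.d.f.\ established in \cite{GaPopa} (Theorems 2.1--2.2 and Corollaries 3.2--3.4) then produces $U\in\Dc'_d(\Hs)$ with $C_U=C$, automatically $\R^d$-stationary and satisfying $K_U=K$. The scalar case is treated identically, replacing $\Cs_1(H)$-valued data with scalar data throughout.

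The subtlest ingredient is the structure theorem for $\Cs_1(H)$-valued distributions invariant under a partial translation subgroup. The scalar case is classical (vanishing partials in $z$ force dependence on $y$ alone), and I would transfer it to the operator-valued setting by pairing with each $A\in\Bc(H)$ to obtain the scalar distribution $\mathrm{tr}\bigl(A\,\widetilde{C}_{U,V}(\cdot)\bigr)$ with the same invariance, collecting the resulting scalar factorizations (unique in $y$), and reassembling them into an element of $\Dc'_d(\Cs_1(H))$ by duality, using continuity in $A$ and the $\Cs_1(H)$-valuedness of the original $\widetilde{C}_{U,V}$.
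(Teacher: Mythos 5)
Your argument is correct, but it reaches \eqref{eq:corel.oper} by a genuinely different route than the paper. The paper fixes $h,k\in H$, regards $\psi\mapsto\big(C_{U,V}(\varphi\otimes\psi)h,k\big)_H$ as defining an element $T^{h,k}$ of $\Bc(\Dc_d,\Dc_d')$, shows that stationarity makes it commute with translations, dualizes to an operator $W^{h,k}\in\Bc(\Ec_d,\Dc_d)$, and invokes the theorem that translation-commuting continuous operators are convolution operators, $W^{h,k}\varphi=u^{h,k}\ast\varphi$; unwinding gives $\big(C_{U,V}(\varphi\otimes\psi)h,k\big)_H=u^{h,k}(\varphi\ast\check\psi)$, and the $\Cs_1(H)$-valued $K_{U,V}$ is then reassembled by sesquilinearity together with the density of $\Dc_d\ast\Dc_d$ in $\Dc_d$. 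You instead perform the classical It\^o--Gelfand change of variables $(y_1,y_2)\mapsto(y_1-y_2,y_2)$, turning diagonal invariance into invariance in one block of coordinates, and apply Schwartz's structure theorem for distributions independent of some variables ($\partial_z T=0\Rightarrow T=S\otimes 1_z$). Your route is more elementary and self-contained: it avoids the detour through $\Ec_d'$, reflexivity, operator duality and the convolution-operator theorem, it yields the factorization on all of $\Dc_{2d}$ rather than only on tensors, and it sidesteps the density-of-$\Dc_d\ast\Dc_d$ step; the paper's route has the advantage of plugging directly into the operator-subordination machinery of the companion paper. Your converse coincides with the paper's (define $C_K(\varphi\otimes\bar\psi)=K(\varphi\ast\tilde\psi)$ and invoke the Kolmogorov-type reconstruction), though written through the same change of variables. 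Two small points of rigor, neither fatal and both comparable to the level of detail in the paper's own proof: (1) your reassembly of the $\Cs_1(H)$-valued $K_{U,V}$ by pairing with $A\in\Bc(H)$ only yields a priori a $\Bc(H)^*$-valued object; it is cleaner to use surjectivity of the fiber integration $\xi\mapsto\int\xi(\cdot,z)\,dz$ from $\Dc_{2d}$ onto $\Dc_d$ and set $K_{U,V}(\varphi):=\widetilde C_{U,V}(\xi)$ for any preimage $\xi$, well-definedness coming from the scalar uniqueness; (2) for operator positive definiteness, scalar linear combinations do not suffice --- one must test against $\Bc(H)$- or vector-valued combinations, i.e.\ verify $\sum_{i,j}\big(K_U(\varphi_i\ast\widetilde{\varphi_j})x_j,x_i\big)_H\geqslant 0$, which follows from $\sum_{i,j}\big[U_{\varphi_i}x_i^{\,*}\text{-combinations}\big]$ exactly as in the diagonal case.
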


\begin{proof}
Let us treat the operator $\R^d$-stationary case. Let $U,V$ be as in the statement and $C_{U,V}\in \Dc'_{2d}\big(\Cs_{1}(H)\big)$ the operator cross covariance distribution associated to the pair $U,V$ as in Section 3 of \cite{GaPopa}. Fix, for now $h,k\in H$ and $\varphi \in\Dc_{d}$. Then the mapping $T_{\varphi}^{h,k}$ defined by

\begin{equation}\label{4413}
\Dc_{d}\ni\psi\mapsto\left(C_{U,V}(\varphi\otimes\psi)h,k\right)_{H}\,\in\mathbb{C}
\end{equation}
is a distribution on $\mathbb{R}^{d}$, i.e. $T_{\varphi}^{h,k}\in\Dc'_{d}$.
Fixing just $h,k\in H$, the mapping $T^{h,k}$ defined by
$$\Dc_{d}\ni\varphi\mapsto T_{\varphi}^{h,k}\,\in\Dc'_{d}$$
is in $\Dc'_{d}(\Dc'_{d})=\Dc'_{d}\varepsilon\Dc'_{d}$, hence in $\Bc(\Dc_{d},\Dc'_{d})$.
Denote by $\widetilde{T}^{h,k}$ the extension by continuity of $T^{h,k}$ from $\Dc_{d}$ to $\Ec'_{d}$, which is possible due to the density of $\Dc_{d}$ in $\Ec'_{d}$.
Moreover, because of the operator cross $\mathbb{R}^d$~-~stationarity of $U$ and $V$, for any $\psi\in \Dc_{d}$ we successively infer:
\begin{equation*}
\begin{split}
\big(\tau_{x}T_{\varphi}^{h,k}\big)(\psi)&=T_{\varphi}^{h,k}(\tau_{-x}\psi)=
\left(C_{U,V}\big(\varphi\otimes\tau_{-x}\psi\big)h,k\right)_{H}=\\
&=\left(C_{U,V}\big(\tau_{x}\varphi\otimes\tau_{x}\tau_{-x}\psi\big)h,k\right)_{H}=\\
&=\left(C_{U,V}\big(\tau_{x}\varphi\otimes\psi\big)h,k\right)_{H}= T_{\tau_{x}\varphi}^{h,k}(\psi),
\end{split}
\end{equation*}
meaning,
$$\tau_{x}T_{\varphi}^{h,k}=T_{\tau_{x}\varphi}^{h,k}\,,\quad x\in\mathbb{R}^{d},$$
wherefrom
$$\tau_{x}T^{h,k}=T^{h,k}\tau_{x}\,;\quad x\in\mathbb{R}^{d},$$
which holds also for the extension
$\widetilde{T}^{h,k}\in \Bc(\Ec'_{d},\Dc'_{d})$:
$$\tau_{x}\widetilde{T}^{h,k}=\widetilde{T}^{h,k}\tau_{x}\,;\quad x\in\mathbb{R}^{d}.$$

From the reflexivity of $\Ec_{d}$ and $\Dc_{d}$, we infer that $\widetilde{T}^{h,k}$ is the dual of an operator $W^{h,k}\in\Bc(\Ec_{d},\Dc_{d})$, which commutes with the translations.
 Applying Theorem 3.5.19., pp.215 from \cite{DGaPGa} to $W^{h,k}$ we infer that there exists a unique $u^{h,k}\in \Dc'_{d}$ such that

\begin{equation}\label{4414}
W^{h,k}\varphi=u^{h,k}\ast\varphi\,\quad \varphi\in \Dc_{d}.
\end{equation}
Using relations as (3.5.27), (3.5.29) pp. 214 -- 216 of \cite{DGaPGa}, because of (\ref{4414}) and (\ref{4413}) for each $\psi \in \Dc_d$ we successively have \footnote{Here $\check \psi$ means $\check \psi (x) = \psi(-x), x \in \R^d$}:
\begin{multline}\label{eq:corr.distrib}
\big(C_{U,V}(\varphi\otimes\psi)h,k\big)_{H} = T_{\varphi}^{h,k}(\psi) = \widetilde{T}_{\varphi}^{h,k}(\psi) = \left(\big(W^{h,k}\big)'\varphi\right)(\psi) \\
= \big(W^{h,k}\psi\big)(\varphi)
= \Big(\big (u^{h,k}\ast\psi\big)\ast\check{\varphi}\Big)(0) = u^{h,k}(\varphi\ast\check{\psi}).
\end{multline}

Fixing for now $\varphi,\psi\in \Dc_{d}$ in the relation

\begin{equation}\label{4415}
u^{h,k}(\varphi\ast\check{\psi})=\big(C_{U,V}(\varphi\otimes\psi)h,k\big)_{H}\,,\quad h,k\in H,
\end{equation}
the sesquilinearity of the form (\ref{4415}), leads to the existence of a linear operator
$$u:\Dc_{d}\ast\Dc_{d}\mapsto \Cs_{1}(H),$$
which -- because of the continuity of $C_{U,V}$ and of the density of $\Dc_{d}\ast\Dc_{d}$ in $\Dc_{d}$ -- can be extended by continuity to the whole $\Dc_{d}$. Denoting this extension also by $u$, we have that $u\in\Dc'_{d}\big(\Cs_{1}(H)\big)$ satisfies
\begin{equation}\label{4416}
u(\varphi\ast\check{\psi}) = C_{U,V}(\varphi\otimes\psi)\,,\quad \varphi,\psi\in\Dc_{d},
\end{equation}
wherefrom by passing to the cross covariance kernel it results for any $\varphi,\psi\in\Dc_{d}$
\begin{equation}\label{4417}
\Gamma_{U,V}(\varphi,\psi)=C_{U,V}(\varphi\otimes\overline{\psi})=
u(\varphi\ast\check{\overline{\psi}})=u(\varphi\ast\widetilde{\psi}),
\end{equation}
which means that \eqref{eq:corel.oper} holds with $K_{U,V}=u$. \\
It remains to clarify if the $\Cs_{1}(H)$~-~valued distribution $K_{U} = K_{U,U}$ is positive definite. This results by noticing that (\ref{4417}) infers
\begin{equation}\label{4418}
K_{U}(\varphi\ast\widetilde{\varphi})=\Gamma_{U}(\varphi,\varphi)\geqslant0\,,\quad\varphi\in \Dc_{d}
\end{equation}
(see also \cite{Popa2}). The existence of $k_{U,V} \in \Dc'_d$ and \eqref{eq:corel.scal} as well as the positive definiteness of $k_U = k_{U,U}$ are
easily inferred by passing to trace. \\
For the converse let us observe that for a given positive definite $\Cs_1(H)$~-~valued distribution $K$ on $\mathbb{R}^d$, the $\Cs_1(H)$~-~valued distribution $C_K$ on $\mathbb{R}^{2d}$ defined by
$$ C_K (\varphi \otimes\bar\psi) = K (\varphi * \tilde \psi), \quad \varphi, \psi \in \Dc_d,$$
being positive definite is the operator correlation distribution of some m.s.o.r.d.f. $U$, which -- having in view \eqref{eq:link_R^d_D_d} -- is operator $\mathbb{R}^d$~-~stationary.
\end{proof}
\begin{rem}
The formulas \eqref{eq:corel.scal} and \eqref{eq:corel.oper} say in fact that the corresponding cross covariance functions $\gamma_{U,V}$ and $\Gamma_{U,V}$ do not depend separately on $\varphi$ and $\psi$, but on $\varphi * \tilde \psi$. This means that the pair of m.s.o.r.f.d. $U,V$  is (operator) cross ${\mathcal D}_d$~-~stationary.  On the other side (\ref{eq:link_R^d_D_d}) implies now easy that the (operator) cross $\Dc_{d}$~-~stationarity implies the (operator) cross $\mathbb{R}^{d}$~-~stationarity. Moreover, it is now easy to see that the (operator) cross $\R^d~$~-~stationarity is equivalent to the (operator) cross $\Dc_d$~-~stationarity.
\end{rem}
The distributions $k_{U}$ and $K_{U}$ associated to the stationary distribution field $U$ in Theorem \ref{teo441}, will be called the \textit{spectral distribution}, respectively the \textit{operator spectral distribution} of $U$. Since they are positive definite the Bochner-Schwartz type Theorem from \cite{Popa2} can be applied. Thus it holds

\begin{teo}\label{teo442}
Given a (operator) stationary m.s.r.d.f. $U\in \Dc'_{d}(\Hs)$, it's (operator) spectral distribution $k_{U}$ ($K_{U}$ respectively) has the properties:
\begin{itemize}
\item[{(i)}]\begin{eqnarray*}
&\displaystyle\int\big(k_{U}\ast\varphi\ast\widetilde{\varphi}\big)(x)dx\geqq 0\,,\quad\varphi\in \Dc_{d},\\
&\displaystyle\int\big(K_{U}\ast\varphi\ast\widetilde{\varphi}\big)(x)dx\geqslant 0\,,\quad \textrm{in}\ \  \Bc(H)\,\quad\varphi\in \Dc_{d};
\end{eqnarray*}
\item[(ii)]\begin{equation*}
\widetilde{k}_{U}=k_{U}\quad,\quad \widetilde{K}_{U}=K_{U}\,;
\end{equation*}
i.e. the spectral distributions are selfadjoint.
\item[(iii)]\begin{eqnarray*}
&k_{U}\in\Dc'_{d,L^{1}}\subset\Dc'_{d,L^{2}}\subset\Bc'_{d}\subset \Sc'_{d},\\
&K_{U}\in\Dc'_{d,L^{1}}\big(\Cs_{1}(H)\big)\subset\Dc'_{d,L^{2}}\big(\Cs_{1}(H)\big)
\subset\Bc'_{d}\big(\Cs_{1}(H)\big)\subset \Sc'_{d}\big(\Cs_{1}(H)\big);
\end{eqnarray*}
i.e. the spectral distributions are summable and consequently they are also $2$-summable, bounded and tempered.
\item[(iv)] there exists a positive tempered measure $\bnu_{U}$ on $\mathbb{R}^{d}$, (a $\Cs_{1}(H)$~-~valued positive tempered measure $\bF_{U}$ on $\mathbb{R}^{d}$, respectively), such that $k_{U}$ is the Fourier transform of $\bnu_{U}$ and $K_{U}$ is the Fourier transform of $\bF_{U}$.

\end{itemize}

\end{teo}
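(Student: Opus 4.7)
The entire statement reduces to the Bochner-Schwartz type theorem of \cite{Popa2} applied to $k_U$ and $K_U$: these were already shown to be positive definite distributions in Theorem \ref{teo441} (in the form $k_U(\varphi\ast\widetilde{\varphi})=\gamma_U(\varphi,\varphi)\geq 0$ and $K_U(\varphi\ast\widetilde{\varphi})=\Gamma_U(\varphi,\varphi)\geq 0$). Each item of the conclusion will then be read off either directly from this positive definiteness or from the Bochner-Schwartz theorem.

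I would dispose of (ii) first, since it is independent of Bochner-Schwartz and relies only on the Hermitian symmetry of the operator covariance, $\Gamma_U(\psi,\varphi)=\Gamma_U(\varphi,\psi)^{\ast}$. Reading this through \eqref{eq:corel.oper} with $f:=\varphi\ast\widetilde{\psi}$, so that $\widetilde{f}=\psi\ast\widetilde{\varphi}$, produces $K_U(\widetilde{f})=K_U(f)^{\ast}$, i.e.\ $\widetilde{K_U}=K_U$ in the natural convention for the involution on $\Cs_1(H)$-valued distributions; the scalar assertion $\widetilde{k_U}=k_U$ then follows by taking traces.

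Items (i), (iii) and (iv) are exactly what the operator-valued Bochner-Schwartz theorem of \cite{Popa2} delivers for positive definite $\Cs_1(H)$-valued distributions (and, by taking traces, for their scalar shadows). Namely, such a distribution is automatically summable, hence also $2$-summable, bounded and tempered --- this is (iii); it coincides with the Fourier transform of a positive tempered $\Cs_1(H)$-valued measure $\bF_U$ (respectively of a positive tempered scalar measure $\bnu_U$) --- this is (iv); and its positivity admits the equivalent convolution formulation $\int(K_U\ast\varphi\ast\widetilde{\varphi})(x)\,dx\geq 0$ in $\Bc(H)$, together with its scalar counterpart --- this is (i). Once summability from (iii) is in hand, (i) becomes essentially a restatement of $K_U(\varphi\ast\widetilde{\varphi})\geq 0$ read via the convolution-integral identity, so no new input is required.

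The main obstacle is not localized in any one step but lies in invoking the operator-valued version of Bochner-Schwartz rather than its classical scalar counterpart. This is precisely what \cite{Popa2} provides, with the careful definitions of positive $\Cs_1(H)$-valued tempered measures and of $\Bc(H)$-valued Fourier inversion on $\R^d$; once these are accepted, the four properties follow uniformly from the positive definiteness already supplied by Theorem \ref{teo441}, and only (ii) requires a direct separate argument.
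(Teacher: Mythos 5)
Your proposal is correct and follows essentially the same route as the paper, whose entire proof is to invoke the Bochner--Schwartz type results of \cite{Popa2} applied to the positive definite distributions $k_U$ and $K_U$ obtained in Theorem \ref{teo441}. You in fact supply more detail than the paper does (notably the separate Hermitian-symmetry argument for (ii)), which is a welcome elaboration rather than a deviation.
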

\begin{proof}
Follows easily, applying the results of \cite{Popa2}.
\end{proof}
In analogy to the classical case from \cite{Kaki}, $\bnu_{U}$ will be called \textit{the representing measure} for $k_{U}$, while $\bF_{U}$ is the \textit{representing measure of the operator spectral distribution} $K_U$.
\begin{defn}
The spaces $L^{2}(\bnu_{U})$ and $L^{2}(\bF_{U})$, associated to the representing measures of the covariance distributions $k_{U}$ and $K_{U}$ of the stationary m.s.o.r.d.f. $U$, are called \textit{the vector spectral domain} (\textit{modular spectral domain} respectively) of the m.s.o. distribution field $U$.
\end{defn}
For the Fourier transform $\mathcal{F}^{1}_{d}$ and for its inverse $\mathcal{\widetilde{F}}_{d}^{1}$ on space $L^{1}(\mathbb{R}^{d})$ of complex valued Lebesque integrable functions on $\mathbb{R}^{d}$, we refer to \cite{Schw1} Def.(VII.2;2) and (VII.2;3) pp.87. Denoting by $\Fc_{d}$, $\widetilde{\Fc}_{d}$ their respective restrictions to the space $\Sc_{d}:=\Sc(\mathbb{R}^{d})$ of complex valued indefinite derivable rapidly decreasing functions from distribution theory $(\Sc_{d}\supset\Dc_{d})$  notice that both are linear topological space automorphisms of $\Sc_{d}$, while their adjoints $\Fc'_{d}$ and $\widetilde{\Fc}'_{d}$ are automorphisms of the space $\Sc'_{d}$ of complex valued tempered distributions, $\Sc'_{d}\subset\Dc'_{d}$.\\
The corresponding Fourier transforms on the spaces $\Sc'_{d}(\Hs):=\Bc(\Sc_{d},\Hs)=\Sc'_{d}\widehat{\otimes}_{\varepsilon}\Hs\subset \Dc'_{d}(\Hs)$ are defined as follows
\begin{equation*}
\bFc'_{d}:=\Fc'_{d}\otimes I_{\Hs},\quad \overline{\bFc}'_{d}:=\overline{\Fc}'_{d}\otimes I_{\Hs}
\end{equation*}
\begin{rem}
Under these hypotheses we can state that
the Fourier transform of a tempered m.s.o.r.d.f. is a tempered m.s.o.r.d.f. too.
\end{rem}
Now we can prove the extension on our framework of the Kolmogorov isomorphism Theorem.
\begin{teo}\label{teo443}
The (operator) stationary m.s.o.r.d.f. $U=\{U_{\varphi}\}_{\varphi\in\Dc_{d}}\in\Dc'_{d}(\Hs)$, is (gramian) unitary equivalent to the Fourier transformation on $L^{2}(\bnu_{U})$ ( $L^{2}(\bF_{U})$ respectively). More precisely, if $U$ is stationary, respectively operator stationary, then the mapping
\begin{equation}\label{4419}
\Hs_{(U)}\ni U_{\varphi} \mapsto \Fc_d\varphi \in L^{2}(\bnu_{U})\,,\quad \varphi\in \Dc_{d}
\end{equation}
can be extended to a Hilbert space isomorphism between $\Hs_{(U)}$ and $L^{2}(\bnu_{U})$, respectively the mapping
\begin{equation}\label{4420}
\Hs_{U}\ni U_{\varphi}\mapsto \Fc_d\varphi I_{H}\in L^{2}(\bF_{U})\,,\quad \varphi\in \Dc_{d}
\end{equation}
can be extended to a normal Hilbert $\Bc(H)$~-~module isomorphism of $\Hs_U$ on $L^2(\bF_U)$.
\end{teo}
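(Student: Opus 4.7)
The plan is to treat the operator case (mapping \eqref{4420}) in detail; the scalar case \eqref{4419} then follows either by taking traces or by running the parallel argument with $k_{U}$ and $\bnu_{U}$ in place of $K_{U}$ and $\bF_{U}$. The starting point is to combine Theorem \ref{teo441} with Theorem \ref{teo442}(iv): this yields $\Gamma_{U}(\varphi,\psi)=K_{U}(\varphi*\widetilde\psi)$ with $K_{U}$ the Fourier transform of the $\Cs_{1}(H)$-valued positive tempered measure $\bF_{U}$. Using the identity $\Fc_{d}(\varphi*\widetilde\psi)=(\Fc_{d}\varphi)\cdot\overline{\Fc_{d}\psi}$ (the complex conjugate appears because under $\Fc_d$ the involution $\widetilde{\ }$ becomes complex conjugation) together with the definition of the Fourier transform of a tempered distribution as the transpose of $\Fc_{d}$, I obtain the crucial identity
\[
[U_{\varphi},U_{\psi}]_{\Hs}\;=\;\Gamma_{U}(\varphi,\psi)\;=\;\int_{\R^{d}}(\Fc_{d}\varphi)(\xi)\,\overline{(\Fc_{d}\psi)(\xi)}\,d\bF_{U}(\xi),
\]
whose right-hand side is precisely the $\Bc(H)$-valued gramian in $L^{2}(\bF_{U})$ of the elements $(\Fc_{d}\varphi)I_{H}$ and $(\Fc_{d}\psi)I_{H}$. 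This single computation contains essentially all the information that will be needed.

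Next, I define the correspondence $\Theta:U_{\varphi}\mapsto(\Fc_{d}\varphi)I_{H}$ on the linear hull of $\{U_{\varphi}:\varphi\in\Dc_{d}\}$. The displayed identity shows that $\Theta$ preserves gramians, which simultaneously yields well-definedness (the kernels of the two sides coincide), $\Bc(H)$-linearity, and the isometry property for the induced scalar norms. Since this linear hull is dense in $\Hs_{U}$ by construction of the modular time domain, $\Theta$ extends by continuity to an isometric embedding $\widetilde{\Theta}:\Hs_{U}\to L^{2}(\bF_{U})$ of normal Hilbert $\Bc(H)$-modules.

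The remaining step is surjectivity, which I reduce to density of $\{(\Fc_{d}\varphi)I_{H}:\varphi\in\Dc_{d}\}$ in $L^{2}(\bF_{U})$. Since $\Fc_{d}$ is a topological automorphism of $\Sc_{d}$ and $\Dc_{d}$ is dense in $\Sc_{d}$, the image $\Fc_{d}(\Dc_{d})$ is dense in $\Sc_{d}$ in the $\Sc_{d}$-topology; it thus remains to see that $\Sc_{d}$ is densely embedded in $L^{2}(\bF_{U})$. This is the main technical obstacle: because $\bF_{U}$ is merely tempered and not necessarily bounded, one cannot invoke a direct finite-measure density argument. However, the temperedness recorded in Theorem \ref{teo442}(iii)--(iv) together with the rapid decay of Schwartz functions permits a standard truncation-and-mollification procedure to approximate indicator functions of bounded Borel sets by elements of $\Dc_{d}$ in $L^{2}(\bF_{U})$-norm, which closes the density argument and completes the proof that $\widetilde{\Theta}$ is a gramian unitary (in the operator case) or an ordinary Hilbert space unitary (in the scalar case).
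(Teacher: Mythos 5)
Your proposal follows essentially the same route as the paper: the central step in both is the computation $[U_\varphi,U_\psi]_{\Hs}=K_U(\varphi*\widetilde\psi)=\int_{\R^d}\Fc_d\varphi\,\overline{\Fc_d\psi}\,d\bF_U=[\Fc_d\varphi I_H,\Fc_d\psi I_H]_{L^2(\bF_U)}$, followed by extension of the gramian-preserving map by continuity. The only difference is that you spell out the surjectivity/density argument (density of $\Fc_d(\Dc_d)$ in $L^2(\bF_U)$ via temperedness of $\bF_U$), which the paper leaves implicit; this is a correct and welcome addition rather than a divergence.
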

\begin{proof}
Let $U$ be operator stationary. The successive computations, using Theorem \ref{teo442} (v) and Theorem \ref{teo441}
\begin{equation*}
\begin{split}
\big[U\varphi,U\psi\big]_{\Hs}&=K_{U}(\varphi\ast\widetilde{\psi})=\bFc'_d \bF_{U}(\varphi\ast\widetilde{\psi})=\\
&=\int\limits_{\mathbb{R}^{d}}\Fc_d(\varphi\ast\widetilde{\psi})(t)d \bF_{U}(t)=\int\limits_{\mathbb{R}^{d}}\Fc_d(\varphi)\overline{\Fc_d\psi}d \bF_{U}(t)=\\
&=\big[\Fc_d\varphi I_{H},\Fc_d(\psi)I_{H}\big]_{L^{2}(\bF_{U})}
\end{split}
\end{equation*}
lead to the conclusion that the modular, gramian preserving, extension of the mapping (\ref{4420}) yields the gramian unitary operator ${\mathbf K}_{U}:\Hs_{U}\rightarrow L^{2}(\bF_{U})$ for which
\begin{equation}\label{4421}
{\mathbf K}_{U}U_{\varphi}=\Fc_{d}(\varphi)I_{H}\,;\quad \varphi\in\Dc_{d}.
\end{equation}
Analogously (\ref{4419}) can be linearly continuously extended to an isomorphism
${\mathbf K}_{(U)}:\Hs_{(U)}\rightarrow L^{2}(\bnu_{U})$ for which
\begin{equation}\label{4422}
{\mathbf K}_{(U)}U_{\varphi}=\Fc_{d}(\varphi)\,;\quad \varphi\in\Dc_{d}.
\end{equation}
\end{proof}
We are now in a position to state the analogue of the integral representation theorem of a (operator) stationary m.s.o.r.d.f. More precisely, we give the representation of such a field as the Fourier transform of a (gramian) orthogonally scattered multivariate stochastic (not necessarily bounded) measure.

\begin{teo}\label{teo444}
Let $U=\{U_{\varphi}\}_{\varphi\in\Dc_{d}}\in\Dc'_{d}(\Hs)$.
\begin{enumerate}
\item[(i)] $U$ is a stationary m.s.o.r.d.f., iff there exists a $\delta$~-~ring $\mathfrak{D}_{(U)}$ of Borel subsets of $\mathbb{R}^{d}$ and a tempered m.s.o. stochastic measure $\xi_{(U)}:\mathfrak{D}_{(U)}\to \Hs$, which is ${\mathfrak D}_{(U)}$ stationary, such that
\begin{equation}\label{4423}
U=\bFc'_d\xi_{(U)}.
\end{equation}
If (\ref{4423}) holds, then
\begin{enumerate}
\item[(i.1)]the vector time domains of $\xi_{(U)}$ and of $U$ are the same,i.e. $\Hs_{\xi_{(U)}}=\Hs_{(U)}$,
\item[(i.3)]$\bnu_{U}(\cdot)=\|\xi_{(U)}(\cdot)\|^{2}_{\Hs}$.
\end{enumerate}
\item[(ii)]$U$ is an operator stationary m.s.o.r.d.f., iff there exists a $\delta$~-~ring $\mathfrak{D}_{U}$ of Borel subsets of $\mathbb{R}^{d}$ and a tempered m.s.o. stochastic measure  $\xi_{(U)}:\mathfrak{D}_{U}\to \Hs$, which is operator ${\mathfrak D}_{U}$ stationary such that

\begin{equation}\label{4424}
U=\bFc'_d\xi_{U}
\end{equation}
holds.\\
If (\ref{4424}) holds, then
\begin{enumerate}
\item[(ii.1)] the modular time domain of $\xi_U$ and of $U$ are the same, i.e. $\Hs_{\xi_{U}}=\Hs_{U}$;
\item[(ii.3)]$\bF_{U}(\cdot)=[\xi_{U}(\cdot),\xi_{U}(\cdot)]_{\Hs}$.
\end{enumerate}
\end{enumerate}
\end{teo}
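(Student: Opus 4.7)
The natural strategy is to build the representing measure $\xi_U$ via the Kolmogorov isomorphism from Theorem \ref{teo443} applied to indicator functions, and read off the Fourier representation from the identity $\bK_U U_\varphi = \Fc_d(\varphi)I_H$. I write the plan for the operator case (ii); part (i) is handled identically with $\bnu_U$ and $\bK_{(U)}$ in place of $\bF_U$ and $\bK_U$.

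\textbf{Necessity.} Assume $U$ is operator stationary. Theorem \ref{teo442}(iv) furnishes the $\Cs_1(H)$-valued positive tempered measure $\bF_U$ with $K_U = \bFc'_d \bF_U$, and Theorem \ref{teo443} the gramian-unitary isomorphism $\bK_U:\Hs_U\to L^2(\bF_U)$ with $\bK_U U_\varphi=\Fc_d(\varphi)I_H$. Let $\mathfrak{D}_U$ be the $\delta$-ring of Borel sets $A\subset\R^d$ with $\bF_U(A)\in\Cs_1(H)$; for such $A$ the element $\chi_A I_H$ lies in $L^2(\bF_U)$, so I set
\[
\xi_U(A):=\bK_U^{-1}(\chi_A I_H),\qquad A\in\mathfrak{D}_U.
\]
Then $\xi_U$ is finitely additive since $\chi_A\mapsto\chi_A I_H$ is, and $\sigma$-additivity follows from dominated convergence in $L^2(\bF_U)$ together with continuity of $\bK_U^{-1}$. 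Orthogonal scatteredness is immediate: for disjoint $A,B\in\mathfrak{D}_U$,
\[
[\xi_U(A),\xi_U(B)]_{\Hs}=[\chi_A I_H,\chi_B I_H]_{L^2(\bF_U)}=\int\chi_A\chi_B\,d\bF_U=\bF_U(A\cap B),
\]
which (by the definition recalled at the end of Section 2) says exactly that $\xi_U$ is operator $\mathfrak{D}_U$-stationary with structure measure $\bF_U$; this also gives (ii.3).

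\textbf{The representation $U=\bFc'_d\xi_U$.} I extend $\bK_U^{-1}$ to simple functions by linearity and then to all of $L^2(\bF_U)$ by continuity, so that $\bK_U^{-1}(fI_H)=\int f\,d\xi_U$ for every $\bF_U$-integrable scalar $f$. Since $\Fc_d(\varphi)\in\Sc_d\subset L^2(\bF_U)$ (by temperedness of $\bF_U$), I obtain
\[
\int\Fc_d(\varphi)\,d\xi_U=\bK_U^{-1}\bigl(\Fc_d(\varphi)I_H\bigr)=U_\varphi,\qquad\varphi\in\Dc_d,
\]
which is the identity $U=\bFc'_d\xi_U$. The equality of modular time domains (ii.1) is built into this construction: $\bK_U$ carries $\Hs_U$ onto the closed $\Bc(H)$-submodule generated by $\{\Fc_d(\varphi)I_H\}$, which coincides with the one generated by $\{\chi_A I_H: A\in\mathfrak{D}_U\}$, and this second set is precisely $\bK_U(\Hs_{\xi_U})$.

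\textbf{Sufficiency.} Conversely, suppose $U=\bFc'_d\xi_U$ with $\xi_U$ operator $\mathfrak{D}_U$-stationary, whose structure measure I call $\bF$. Then $U_\varphi=\int\Fc_d(\varphi)\,d\xi_U$ and, using orthogonal scatteredness,
\[
\Gamma_U(\varphi,\psi)=\Bigl[\int\Fc_d(\varphi)d\xi_U,\int\Fc_d(\psi)d\xi_U\Bigr]_{\Hs}=\int\Fc_d(\varphi)\overline{\Fc_d(\psi)}\,d\bF=\int\Fc_d(\varphi*\widetilde\psi)\,d\bF,
\]
so $\Gamma_U(\varphi,\psi)=K(\varphi*\widetilde\psi)$ with $K=\bFc'_d\bF$; by Theorem \ref{teo441} this is the operator $\R^d$-stationarity of $U$, and uniqueness of the representing measure in Theorem \ref{teo442}(iv) gives $\bF=\bF_U$.

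\textbf{Main obstacle.} The technically delicate point is the passage from scalar integrands against the operator-valued measure $\xi_U$ to the distributional identity $\bFc'_d\xi_U=U$: one must check that the linear extension of $\bK_U^{-1}$ on simple functions genuinely realises vector integration $f\mapsto\int f\,d\xi_U$ on the whole of $L^2(\bF_U)$, and that Fubini/duality justifies identifying $\int\Fc_d(\varphi)d\xi_U$ with $(\bFc'_d\xi_U)_\varphi$ in the sense of $\Hs$-valued tempered distributions. Everything else reduces to isometric/gramian-preserving transcriptions through the Kolmogorov isomorphism.
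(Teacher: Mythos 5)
Your proposal follows essentially the same route as the paper's own proof: both construct $\xi_U(\beta)=\mathbf{K}_U^{-1}(\chi_\beta I_H)$ on the $\delta$-ring where $\bF_U$ is finite, verify complete additivity and the gramian identity $[\xi_U(\beta_1),\xi_U(\beta_2)]_{\Hs}=\bF_U(\beta_1\cap\beta_2)$, extend $\mathbf{K}_U^{-1}$ from simple functions to $L^2(\bF_U)$ to realise it as the integral $f\mapsto\int f\,d\xi_U$, and specialise to $f=\Fc_d(\varphi)I_H$ to obtain \eqref{4424}. Your sufficiency direction is in fact spelled out more explicitly than the paper's (which merely asserts that the converse follows from the integral representation), but it is the same argument via Theorem \ref{teo441}, so no substantive difference.
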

\begin{proof}
We restrain ourselves to the case where $U$ is operator stationary.
Let ${\mathbf K}_{U}$ be the Kolmogorov isomorphism of $\Hs_{U}$ onto $L^{2}(\bF_{U})$, where $\bF_{U}$ is the representing measure of the covariance kernel $\Gamma_{U}$ of $U$. Denote by $\mathfrak{D}_{U}$ the $\delta$~-~ring of Borel sets $\beta$ from $\mathbb{R}^{d}$ for which $\|\bF_{U}\|_{o}(\beta)<+\infty$. For such $\beta$, it is obvious that $\chi_{\beta}I_{H}\in L^{2}(\bF_{U})$. We shall consider hence

\begin{equation}\label{4425}
\xi_{U}(\beta):={\mathbf K}_{U}^{-1}(\chi_{\beta}I_{H})\,,\quad \beta\in\mathfrak{D}_{U}.
\end{equation}
The properties of $\bF_{U}$ and of ${\mathbf K}_{U}$, imply $\xi_{U}(\emptyset)=0$ and that $\xi_{U}$ is completely additive. Since ${\mathbf K}_{U}$ is a gramian unitary operator between $\Hs_{U}$ and $L^{2}(\bF_{U})$ we shall have for $\beta_{1},\,\beta_{2}\in \mathfrak{D}_{U}$

\begin{equation}\label{4426}
\begin{split}
\big[\xi_{U}(\beta_{1}),\xi_{U}(\beta_{2})\big]_{\Hs}&=\int\limits_{\mathbb{R}^{d}}
\chi_{\beta_{1}}(t)\overline{\chi_{\beta_{2}}(t)}I_{H}d\bF_{U}(t)=\\
&=\bF_{U}(\beta_{1}\cap\beta_{2}),
\end{split}
\end{equation}
wherefrom we infer $\xi_{U}\in cagos(\mathfrak{D},\Hs_{U})$. From the definition of $\xi_{U}$, putting $f=\sum\limits_{j=1}^{n}\chi_{\beta_{j}}a_{j}$, $ a_{j}\in \Bc(H),\,\beta_{j}$~-~disjoint, since ${\mathbf K}_{U}$ (and ${\mathbf K}_{U}^{-1}$) is $\Bc(H)$~-~linear we get:

\begin{equation}\label{4427}
K_{U}^{-1}(f)=\int\limits_{\mathbb{R}^{d}}f(t)d\xi_{U}(t),
\end{equation}
which can be extended by taking the limits for $f\in L^{2}(\bF_{U})$. Putting now $f=\Fc\varphi I_{\Hs}$, (\ref{4427}) becomes $U_{\varphi}=\int\limits_{\mathbb{R}^{d}}\Fc\varphi d\xi_{U}(t)\,,\quad (\varphi\in \Dc_{d})$, which is exactly (\ref{4424}).\\
Now (ii) results from the integral representation of $U_{\varphi}$, given by (\ref{4424}) $U_{\varphi}=\int\limits_{\mathbb{R}^{d}}(\Fc\varphi)(t)d\xi_{U}(t)\,,\quad \varphi\in \Dc_{d}$, and (ii.2) is equivalent to the fact that $\xi_{U}$ is gramian orthogonally scattered (gos), while (ii.3) results, computing the gramian $[U_{\varphi},U_{\psi}]=\Gamma_{U}(\varphi,\psi)$, using the integral forms of $U_{\varphi}$ and $U_{\psi}$ and then identifying with the integral representation of the operator covariance by  $\bF_{U}$.
\end{proof}


\begin{thebibliography}{99}




\bibitem{Bala}{{\sc K. Balagangadharan}, {\em The prediction theory of stationary random distributions}, {Memoirs of the College of Science}, {University of Kyoto}, {Series A, Mathematics} vol. XXXIII, {no. 2}, (1960), 243 -- 256.}














\bibitem{DGaPGa}{{\sc D. Ga\c{s}par, P. Ga\c{s}par}, {\em Analiz\u{a} func\c{t}ional\u{a}}, Ed. a 2-a,  Ed. de Vest, Timi\c{s}oara, 2009.}



























\bibitem{GaPopa} {{\sc P. Ga\c spar, L. Popa}, {\em Stochastic Mappings and Random Distribution Fields. A Correlation Approach}, {to appear.}}




\bibitem{Gelf1}{{\sc I. M. Gelfand}, {\em Generalized random processes  (russian)}, DAN 100 (1955), 953 -- 856.}













\bibitem{Ito} { {\sc K. It\^o}, {\em Stationary Random Distributions},{ Mem. Coll. Sci. Univ. Kyoto A}, XXVIII {\bf 3}, (1953), 209 -- 223.}

\bibitem{Kaki}{ {\sc Y. Kakihara}, {\em Multidimensional Second Order Stochastic Processes}, {World Scientific Publ.Comp.}, {River Edge}, N.I., 1997.}




\bibitem{Khin} {{\sc A.Ya. Khintchine}, {\em Korrelationstheorie der station\"{a}ren stochastischen Prozesse}, {Math. Ann.} {\bf109} (1934), 605 -- 615.}

\bibitem{Kolmo}{ {\sc A.~N. Kolmogorov}, {\em Stationary sequences in Hilbert space}, {Bull. Moskov. Gos. Univ. Matematika}, {\bf 2}, (1941), 1 -- 40.}




\bibitem{MasDilat} {{\sc P. Masani}, {\em Dilations as propagators of Hilbertian varieties},{ Siam  J. Math. Anal., Vol.9},  {\bf 3}  {June}  (1978), 414 -- 456.}
\bibitem{MasPred} {{\sc P. Masani}, {\em The prediction theory of multivariate stochastic processes}, {III},{ Acta Math.},  {\bf 104}  {June}  (1960), 141 -- 162.}











\bibitem{Popa2} {{\sc L. Popa}, {\em A characterization of operator positive definite distributions},{ Anal. Univ. Vest Timi\c soara}, vol. XLV fasc. {\bf 2}, (2007), 109 -- 116.}









\bibitem{Roz2}{ {\sc Yu. A. Rozanov}, {\em On extrapolation of random distributions (russian)}, {Teorija Veroyatnosti IV} (1959), 465 -- 471.}




\bibitem{Schw1} {{\sc L. Schwartz}, {\em Th\'eorie des distributions I, II}, Fasc. IX, X, {Actualit\'{e}s Scientifiques Industrielles} {\bf 1245, 1112}, Hermann, Paris, 1957, 1959.}








\bibitem{SucVal} {{\sc I. Suciu and I. Valu\c{s}escu}, {\em A linear filtering problem in complet correlated actions},{ Journal of Multivariate Analysis} {\bf 9, 4} (1979), 559 -- 613.}




\bibitem{Szaf}{{\sc F. H. Szafraniec}, {\em Murphy's positive definite kernels and Hilbert $C^{\ast}$~-~modules reorganized, Noncommutative Harmonic Analysis with Applications to Probability} II, Banach Center Publications, Inst. of Math. Polysh Acad. of Sciences, Warszawa, vol. 89 (2010), 275 -- 295}






\bibitem{Val 80} {{\sc I. Valu\c{s}escu}, {\em Stationary processes in complete correlated actions}, {Mon. Mat.} {\bf 80}, {West University of  Timi\c{s}oara}, {Timi\c{s}oara}, 2007.}


\bibitem{ValPas}  {{\sc I. Valu\c{s}escu and P. Ga\c{s}par}, {\em On Uniformly Bounded Linearly $\Gamma$~-~stationary Processes}, {Numerical Analysis and Applied Mathematics},{International Conference}, {Rhodes}, {Greece}, 2010}



\bibitem{WieMas1} {{\sc N. Wiener, P. Masani}, {\em The prediction theory of multivariate stochastic processes, I},{ Acta. Math.},{\bf 98} (1957), 111 -- 150.}

\bibitem{WieMas2} {{\sc N. Wiener, P. Masani}, {\em The prediction theory of multivariate stochastic processes, II}, {Acta. Math.}, {\bf 99} (1958), 93 -- 137.}

\bibitem{Wold} {{\sc H. Wold}, {\em A Study in the Analysis of Stationary Time Series}, {Almquist Wiksell}, {Stokholm}, 1938, $2^{-nd}$ ed., Stockholm, 1954.}








\end{thebibliography}
\end{document}